\newtheorem{theorem}{Theorem}
\newtheorem{lemma}[theorem]{Lemma}
\newtheorem{conjecture}[theorem]{Conjecture}
\newtheorem{proposition}[theorem]{Proposition}
\newtheorem{observation}[theorem]{Observation}
\numberwithin{theorem}{section}
\newcommand{\sm}{\setminus}
\newcommand{\cs}{\mathcal{S}}
\begin{document}

\title{Domination and fractional domination in digraphs}

\author{Ararat Harutyunyan$^a$, Tien-Nam Le$^b$,\\ Alantha Newman$^c$, and St\'{e}phan Thomass\'{e}$^b$\\~\\
\small $^a$LAMSADE, CNRS Universit\'{e} Paris-Dauphine
 \\ \small PSL Research University \\ \small 75016 Paris, France\\~\\
\small $^b$Laboratoire d'Informatique du Parall\'elisme \\ \small   UMR 5668 ENS Lyon - CNRS - UCBL - INRIA
\\ \small Universit\'e de Lyon, France\\~\\
\small $^c$Laboratoire G-SCOP \\ \small CNRS, Universit\'e Grenoble-Alpes \\ \small Grenoble, France}

\date{}

\maketitle

\begin{abstract}
In this paper, we investigate the relation between the (fractional)
domination number of a digraph $G$ and the independence number of its
underlying graph, denoted by $\alpha(G)$. More precisely, we prove
that every digraph $G$ on $n$ vertices has fractional domination number at most
$2\alpha(G)$ and domination number at most $2\alpha(G) \cdot \log{n}$.
Both bounds are sharp.
\end{abstract}

\section{Introduction}\label{sec:intro}

Every digraph in this paper is simple, loopless and finite, where a
digraph $G$ is \emph{simple} if for every two vertices $u$ and $v$ of
$G$, there is at most one arc with endpoints $\{u,v\}$.  Given a
digraph $G$, we denote by $V(G)$ and $E(G)$ the vertex set and arc set
of $G$, respectively. The \emph{independence number} $\alpha(G)$ of a
digraph $G$ is the independence number of the underlying (undirected)
graph of $G$. A digraph with independence number 1 is called a
\emph{tournament}.  An {\em Eulerian tournament} is a tournament that
is in addition Eulerian (i.e., the indegree of each vertex equals its
outdegree).

Given a digraph $G=(V,E)$, we say that a vertex of $G$
\emph{dominates} itself and all of its out-neighbors. A set of
vertices $S \subseteq V$ is called a \emph{dominating set} if every
vertex $v \in V$ is either an element of $S$ or is an out-neighbor of
some element of $S$.  The \emph{domination number} $γ\gamma(G)$ is the
cardinality of a minimum (by cardinality) dominating set of $G$.
Domination in tournaments has been well-studied \cite{ABK06},
\cite{GP14}, \cite{MV88}, while little is known for domination in
general digraphs. Recently, it was proved \cite{CKLST},
\cite{HLNT17+}, \cite{HLTW17+} that the domination number is closely
related to the dichromatic number when the digraph is a
tournament or a dense digraph.  
The topic of domination in undirected
graphs (where $S \subseteq V$ is a \emph{dominating set} if every
vertex $v \in V$ is either an element of $S$ or is a neighbor of some
element of $S$) has also been studied extensively, see for example the
monograph \cite{HHS}.  It is a well-known fact that in an undirected
graph, any maximal independent set is also a dominating set.

Suppose we are given a digraph $G=(V,E)$, a subset $S$ of $V$, and a
function $g:V\to\{0,1\}$ such that $g(v) =1$ if $v\in S$ and $g(v)= 0$
otherwise. Then $S$ is a dominating set of $G$ if and only if
$\sum_{x\in N^-(v)\cup\{v\}}g(x)\ge 1$ for every $v\in V$, where
$N^-(v)$ is the set of in-neighbors of $v$. Thus, a natural linear
relaxation of domination in digraphs arises as follows.  A
\emph{fractional dominating function} of $G$ is a function $g : V \to
     [0,1]$ such that $\sum_{x\in N^-(v)\cup\{v\}}g(x)\ge 1$ for every
     $v\in V$.  The \emph{fractional domination number} $\gamma^*(G)$
     is the smallest value of $\sum_{v\in V}g(v)$ over all fractional
     dominating functions $g$ of $G$.\footnote{We remark that when constructing a dominating function
  $g(\cdot)$ to upper bound $\gamma^*(G)$ by the value $g(V)$, it is
  sufficient to show that $g(v) \geq 0$ and 
$\sum_{x\in N^-(v)\cup\{v\}}g(x)\ge 1$ for every $v \in V$.  If $g(v)
  > 1$ for some $v \in V$, then the 
function $g(\cdot)$ is not minimal (i.e., $g(v)$ can be decreased).}
The fractional domination number
     of a tournament was the main tool to prove the long standing
     Erd\"os-Sands-Sauer-Woodrow conjecture in \cite{BLT}.

In this paper, we show that for any digraph, its fractional domination
number is at most twice its independence number, and this bound is
sharp.
\begin{theorem}\label{thm:frac_dom}
For every digraph $G$, we have $\gamma^*(G) \leq 2\alpha(G)$.
\end{theorem}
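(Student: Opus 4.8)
The plan is to pass to the LP dual of the fractional domination LP and bound the dual optimum. Writing $\gamma^*(G) = \min\{g(V) : g \ge 0,\ g(N^-[v]) \ge 1 \ \forall v\}$, where $N^-[v] := \{v\}\cup N^-(v)$ and $g(S) := \sum_{u\in S} g(u)$, strong LP duality gives $\gamma^*(G) = \max\{y(V) : y \ge 0,\ y(N^+[v]) \le 1 \ \forall v\}$, with $N^+[v] := \{v\}\cup N^+(v)$. (Equivalently, one can phrase everything through the von Neumann minimax theorem applied to the natural domination game.) So it suffices to show: if $y : V \to \mathbb{R}_{\ge 0}$ satisfies $y(N^+[v]) \le 1$ for every $v$, then $W := y(V) \le 2\alpha(G)$.

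The key step will be to double count the quantity $\sum_{v} y(v)\, y(N^+[v])$. On the one hand it is at most $\sum_v y(v) = W$. On the other hand, since $u \in N^+[v]$ iff $u=v$ or $v\to u$, expanding gives $\sum_v y(v)\,y(N^+[v]) = \sum_v y(v)^2 + \sum_{v\to u} y(v)y(u) =: S + A$, the second sum being over the arcs of $G$. Hence $S + A \le W$. Separately, $W^2 = \big(\sum_v y(v)\big)^2 = S + 2\sum_{\{u,v\}} y(u)y(v)$, and splitting the unordered pairs according to whether $\{u,v\}$ is an edge of the underlying graph $U$ of $G$ — using that $G$ is simple, so edges of $U$ correspond bijectively to arcs of $G$ and $\sum_{\{u,v\}\in E(U)} y(u)y(v) = A$ — yields $W^2 = S + 2A + 2P$, where $P := \sum_{\{u,v\}\notin E(U)} y(u)y(v) \ge 0$. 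Combining these, $W^2 = (S+A) + A + 2P \le W + W + 2P = 2W + 2P$, using $A \le S+A \le W$.

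It remains to bound $P$, the total $y$-weight on non-edges of $U$, i.e. on edges of the complement $\overline U$. Since $\alpha(G) = \alpha(U)$ equals the clique number $\omega(\overline U)$, the graph $\overline U$ has no clique of size $\alpha(G)+1$, so by the weighted form of the Motzkin–Straus inequality $P = \sum_{\{u,v\}\in E(\overline U)} y(u)y(v) \le \tfrac12\big(1 - \tfrac{1}{\alpha(G)}\big) W^2$. Plugging $P \le \frac{\alpha(G)-1}{2\alpha(G)} W^2$ into $W^2 \le 2W + 2P$ gives $\frac{1}{\alpha(G)} W^2 \le 2W$, hence $W \le 2\alpha(G)$, which is what we wanted. (When $G$ is a tournament, $P=0$ and this specializes to $\gamma^*(G)\le 2$.)

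The main obstacle is really just hitting on the right quantity $\sum_v y(v)\,y(N^+[v])$ to double count, and recognizing that the leftover nonnegative term $P$ is exactly what the independence number controls via Motzkin–Straus; after that the rest is a one-line quadratic-inequality computation. Secondary points to handle carefully are the (routine) passage to the dual LP and the statement of Motzkin–Straus in its weighted, non-normalized form — which is elementary and could be proved from scratch if one prefers self-containedness — together with the degenerate cases $V(G)=\emptyset$ or $W=0$, which are trivial, and the remark that $\alpha(G)\ge 1$ as soon as $V(G)\ne\emptyset$ so the bound above makes sense.
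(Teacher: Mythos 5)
Your proof is correct, and it takes a genuinely different route from both proofs in the paper. You dualize the fractional domination LP itself, obtaining weights $y\ge 0$ with $y(N^+[v])\le 1$ for all $v$, and then bound $W=y(V)$ by double counting $\sum_v y(v)\,y(N^+[v])$ and controlling the leftover weight on non-edges via the weighted Motzkin--Straus inequality; the algebra $W^2\le 2W+2P$ with $P\le\tfrac12(1-\tfrac1{\alpha})W^2$ checks out and yields $W\le 2\alpha(G)$. The paper instead works with a covering LP over maximal stable sets: its key combinatorial fact (Lemma~\ref{lem:matrix2}) is that for any weighting $p$ some stable set $S$ satisfies $p(N^+[S])\ge p(V)/2$, proved by induction on $N^o(v)$ starting from a "median" vertex, after which the factor $\alpha$ enters simply because each stable set has at most $\alpha$ vertices; a second proof runs the same induction on $\alpha$ directly using Fisher's lemma. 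Your opening double count is essentially the same trick as the paper's Lemma~\ref{lem:matrix1}, but where the paper then recurses on $N^o(v)$ you instead absorb the whole independence-number dependence into one application of Motzkin--Straus, which makes the argument shorter and non-inductive. What you lose relative to the paper's first proof is the extra structural information: their primal solution is an explicit convex combination of stable sets, which is exactly what powers the integral result (Lemma~\ref{thm:bound1} and Theorem~\ref{thm:first}), whereas your dual bound certifies only the LP value. Do make sure to state the weighted Motzkin--Straus inequality precisely (with $\omega(\overline U)=\alpha(G)$) or include its short proof, and note the trivial cases $W=0$ and $V=\emptyset$ as you already indicate.
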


In contrast to the fractional domination number, it is not possible to
bound the domination number of a digraph in terms of its independence
number.  Indeed, it was shown in \cite{L01} that almost surely a
random tournament has domination number on the order of $\log n$, much
larger than its independence number of 1.  However, the upper bound of
$\log{n}$ on the domination number of a tournament can be extended to
general digraphs.

\begin{theorem}\label{thm:int_dom}
For every digraph $G$ on $n$ vertices, we have $\gamma(G) \leq \alpha(G) \cdot \log{n}$.
\end{theorem}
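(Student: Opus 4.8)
The plan is to deduce Theorem~\ref{thm:int_dom} from a ``halving lemma'' and then iterate. Concretely, I would first prove the following.

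\emph{Halving Lemma.} Every digraph $H$ on $m\ge 1$ vertices contains a set $T$ with $|T|\le\alpha(H)$ and $|N^+[T]|\ge m/2$, where $N^+[T]:=T\cup N^+(T)$.

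Granting this, I would build a dominating set of $G$ in rounds: set $U_0:=V(G)$, and while $U_i\ne\emptyset$ apply the lemma to $G[U_i]$ (whose independence number is $\le\alpha(G)$) to obtain $T_i\subseteq U_i$ with $|T_i|\le\alpha(G)$ and $|N^+_{G[U_i]}[T_i]|\ge|U_i|/2$, then set $U_{i+1}:=U_i\setminus N^+_{G[U_i]}[T_i]$. Since $|U_{i+1}|\le|U_i|/2$, after at most $1+\lfloor\log_2 n\rfloor$ rounds we reach $U_k=\emptyset$; any vertex deleted in round $i$ lies in $N^+_{G[U_i]}[T_i]\subseteq N^+_G[T_i]$, so $T:=\bigcup_i T_i$ dominates $G$ and $|T|\le\sum_i|T_i|\le\alpha(G)\,(1+\lfloor\log_2 n\rfloor)$. (A sharper accounting — using that the lemma in fact yields $|N^+[T]|>m/2$ — reduces the number of rounds to $\lceil\log_2 n\rceil$, giving the bound as stated.)

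I would prove the Halving Lemma by induction on $\alpha:=\alpha(H)$, in the contrapositive form: \emph{if no set of at most $\alpha$ vertices of $H$ dominates $\ge m/2$ vertices, then $\alpha(H)\ge\alpha+1$}. The base case $\alpha=0$ is immediate. For the step, the decisive choice is to pick a vertex $v$ with $d^-(v)\le d^+(v)$ — such a vertex exists since $\sum_u\bigl(d^-(u)-d^+(u)\bigr)=|E(H)|-|E(H)|=0$ — and to delete the closed neighbourhood $N[v]$ of $v$ \emph{in the underlying graph} (not merely $N^+[v]$). As $H$ is simple it has no digons, so $N^+(v)\cap N^-(v)=\emptyset$ and hence $|N[v]|=1+d^+(v)+d^-(v)\le 2\bigl(1+d^+(v)\bigr)-1=2\,|N^+[v]|-1$. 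Writing $H':=H-N[v]$ and $m':=|V(H')|$, one checks from the hypothesis that no set of at most $\alpha-1$ vertices dominates $\ge m'/2$ in $H'$: if $S'$ were such a set, then $\{v\}\cup S'$ dominates $|N^+[v]|+|N^+_{H'}[S']|$ vertices of $H$ (the two parts being disjoint, one inside $N[v]$ and one outside), and
\[
|N^+[v]|+|N^+_{H'}[S']|\ \ge\ |N^+[v]|+\tfrac{m'}{2}\ \ge\ |N^+[v]|+\tfrac{m-(2|N^+[v]|-1)}{2}\ >\ \tfrac{m}{2},
\]
contradicting the hypothesis (a short separate check gives $m'\ge1$: otherwise $V(H)=N[v]$, and then $\{v\}$ alone dominates $|N^+[v]|\ge(m+1)/2>m/2$). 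By the induction hypothesis applied to $H'$ with parameter $\alpha-1$, $\alpha(H')\ge\alpha$, so $H'$ has an independent set $I'$ of size $\alpha$; since $I'\subseteq V(H)\setminus N[v]$, the set $I'\cup\{v\}$ is independent in $H$, giving $\alpha(H)\ge\alpha+1$.

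The only genuinely delicate point — and the step I expect to be the main obstacle — is making the inductive step close. Deleting $N^+[v]$ would make the domination bookkeeping transparent but would not let one re-attach $v$ to an independent set of the smaller digraph; deleting the full closed neighbourhood $N[v]$ repairs this, but then one must control $|N[v]|$ against $|N^+[v]|$, which is precisely why $v$ is chosen with $d^-(v)\le d^+(v)$ and why simplicity (no digons) enters. Everything else — the handshake-type identity producing such a $v$, the displayed estimate, and the rounding in the round count — is routine. I note that one could instead round the fractional dominating function supplied by Theorem~\ref{thm:frac_dom}, along the lines of the analysis of the greedy set-cover algorithm, to obtain a bound of the same shape $O(\alpha(G)\log n)$, but with a worse constant than the argument above.
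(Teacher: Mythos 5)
Your proof is correct and is essentially the paper's argument: your Halving Lemma is the paper's Lemma~\ref{lem:matrix2} specialized to $p\equiv 1$, your choice of $v$ with $d^-(v)\le d^+(v)$ is Lemma~\ref{lem:matrix1} for that $p$, the recursion on $H-N[v]=G[N^o(v)]$ is the same, and the $\log n$-fold iteration is exactly Lemma~\ref{thm:bound1}. The only (cosmetic) difference is that you run the induction on $\alpha$ in contrapositive form and track $|T|\le\alpha$, whereas the paper inducts on $|V|$ and produces the same set directly as a stable set.
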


Sometimes, it is in fact possible to bound the domination number of a
digraph purely in terms of its independence number.  We discuss this
further in the last section.

\subsection{Notation}
Let $G=(V,{E})$ be a digraph; for every $v\in V$, we denote by
$N_G^+(v),N^-(v)$ the set of out-neighbors and in-neighbors of $v$,
respectively.  Let $N_G^+[v] = N_G^+(v) \cup\{v\} $ and
$N_G^-[v]=N_G^-(v) \cup\{v\} $.  Given a subset $S$ of $V$, we write
$N_G^+(S)= \bigcup_{v\in S}N_G^+(v)$, and similarly for $N_G^-(S),
N_G^+[S]$, and $N_G^-[S]$.  Given two vertices $u,v$, if $uv,vu\notin
E$, we say that $u$ and $v$ are \emph{independent}. We denote by
$N_G^o(v)$ the set of vertices that are independent with $v$ (i.e.,
$N_G^o(v)= V\sm(N_G^+[v]\cup N_G^-[v])$).  When it is clear from the
context, we may omit the subscript $G$. Given a subset $X$ of $V$, we
denote by $G[X]$ the induced subgraph of $G$ on $X$.  Given a digraph
$G=(V,E)$ and a function $g$ on $V$, we write $g(X):= \sum_{v\in
  X}g(v)$ for short.  We sometimes use $n$ to denote $|V(G)|$.
Finally, we mention a trivial but useful observation regarding the
independence number of a digraph.

\begin{observation}\label{ob:indep}
If $G$ is a digraph with independence number $\alpha$ and $v$ is an
arbitrary vertex in $G$, then $G[N^o(v)]$ has independence number at
most $\alpha - 1$.
\end{observation}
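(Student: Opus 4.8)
The plan is to argue directly from the definition of the independence number, using the key property that every vertex of $N^o(v)$ is by definition independent with $v$. Concretely, I would start by taking an arbitrary independent set $I$ of the induced subdigraph $G[N^o(v)]$ and show that $I \cup \{v\}$ is an independent set of $G$; the bound $|I| \le \alpha - 1$ then follows immediately since $v \notin N^o(v)$ forces $I \cup \{v\}$ to have size $|I|+1 \le \alpha$.

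The first step is to observe that $I$ is also an independent set of $G$ itself, not merely of the induced subdigraph: independence is a property of pairs of vertices, so if no two vertices of $I$ have an arc between them in $G[N^o(v)]$, the same holds in $G$. The second step is to check the remaining pairs, namely $\{u, v\}$ for $u \in I$. Since $I \subseteq N^o(v)$ and $N^o(v) = V \sm (N^+[v] \cup N^-[v])$ consists precisely of the vertices independent with $v$, we have $uv, vu \notin E(G)$ for each such $u$. Hence every pair within $I \cup \{v\}$ is independent, so $I \cup \{v\}$ is an independent set of $G$.

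The third and final step is the counting: since $v \in N^+[v]$ we have $v \notin N^o(v) \supseteq I$, so $|I \cup \{v\}| = |I| + 1$, and therefore $|I| + 1 \le \alpha(G) = \alpha$, i.e. $|I| \le \alpha - 1$. As $I$ was an arbitrary independent set of $G[N^o(v)]$, this gives $\alpha(G[N^o(v)]) \le \alpha - 1$. There is essentially no obstacle here — the only thing to be careful about is not to conflate ``independent in the induced subdigraph'' with ``independent in $G$'', but since having or not having an arc is inherited both ways between $G$ and any induced subdigraph, this is immediate; the statement is genuinely a one-line consequence of the definitions.
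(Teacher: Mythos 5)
Your proof is correct and is exactly the standard argument; the paper itself states this as a trivial observation and omits the proof entirely. Adjoining $v$ to an independent set of $G[N^o(v)]$ and noting that all pairs remain independent is precisely the intended one-line justification.
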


\section{Fractional domination in digraphs}\label{sec:frac}

In this section, we present two proofs of Theorem \ref{thm:frac_dom}.
The first proof uses the duality of linear programming, while the
second proof is by induction.  We first present some useful lemmas.
\begin{lemma}\label{lem:matrix1}
Given a digraph $G=(V,E)$ and a function $p:V\to[0,1]$, there is a
vertex $v\in V$ such that $p(N^-(v))\le p(N^+(v))$.
\end{lemma}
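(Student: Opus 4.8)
The plan is to locate the desired vertex by a weighted averaging argument rather than by an explicit construction. For each vertex $w$ set $x_w := p(N^+(w)) - p(N^-(w))$; the goal is to produce some $v$ with $x_v \ge 0$. There is no reason for the plain sum $\sum_w x_w$ to have a useful sign (it equals $\sum_u p(u)\bigl(|N^-(u)| - |N^+(u)|\bigr)$), so instead I would look at the $p$-weighted sum and show that $\sum_{w\in V} p(w)\,x_w = 0$.

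This identity follows by reorganising both halves as sums over arcs. Since $p(N^+(w)) = \sum_{u : wu\in E} p(u)$, we get $\sum_w p(w)\,p(N^+(w)) = \sum_{wu\in E} p(w)p(u)$, and symmetrically $\sum_w p(w)\,p(N^-(w)) = \sum_{uw\in E} p(u)p(w)$; both sums equal $\sum_{ab\in E} p(a)p(b)$, so their difference vanishes. (Equivalently, writing $A$ for the adjacency matrix of $G$ one has $x = (A - A^\top)p$ and $p^\top(A - A^\top)p = 0$ since $A - A^\top$ is skew-symmetric, which explains the shape of the statement.)

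To finish, suppose for contradiction that $x_w < 0$ for every $w\in V$. Then each term $p(w)\,x_w$ is $\le 0$, and it equals $0$ only when $p(w)=0$; since the terms sum to $0$, we must have $p(w) = 0$ for all $w$, i.e.\ $p\equiv 0$. But then $x_w = 0$ for every $w$, contradicting $x_w<0$. Hence some vertex $v$ satisfies $x_v\ge 0$, that is $p(N^-(v))\le p(N^+(v))$. I do not expect a genuine obstacle: the whole proof is a one-line double count plus a sign argument, and the only thing one has to spot is that the arc-counting identity must be weighted by $p(w)$ so that each arc $ab$ contributes the symmetric term $p(a)p(b)$ and cancels.
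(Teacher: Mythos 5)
Your proof is correct and is essentially the paper's argument: both rest on the identity $\sum_{w}p(w)\,p(N^+(w))=\sum_{w}p(w)\,p(N^-(w))$ obtained by summing over arcs, followed by a sign/contradiction step. Your version is in fact slightly more careful, since it explicitly handles the degenerate case $p\equiv 0$, where the paper's summed strict inequality would need a word of justification.
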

\begin{proof}
Suppose that the lemma is false. Then for the function $p$, we have
$p(N^-(v)) > p(N^+(v))$ for every $v \in V$ (i.e.,
$\sum_{x\in
  N^-(v)}p(x)> \sum_{y\in N^+(v)}p(y)$ for every $v$). Hence
\begin{align*}
\sum_{v\in V}p(v)\bigg(\sum_{x\in N^-(v)}p(x)\bigg)&> \sum_{v\in
  V}p(v)\bigg(\sum_{y\in N^+(v)}p(y)\bigg)\\
\Longrightarrow \ \ \ \ \ \ \ \ \ \ \  \sum_{xv\in E}p(v)p(x)&> \sum_{vy\in E}p(v)p(y),
\end{align*}
a contradiction.
\end{proof}

\begin{lemma}\label{lem:matrix2}
Given a digraph $G=(V,E)$ and a function $p:V\to [0,1]$ with $p(V) > 0$,
there is a  stable set $S\subseteq V$ such that $p(N^+[S]) \geq p(V)/2$.
\end{lemma}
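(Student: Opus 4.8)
The plan is to prove the lemma by induction on $|V(G)|$ (equivalently, by a greedy process), using Lemma \ref{lem:matrix1} at each step to choose a good vertex around which to grow the stable set. The base case $|V(G)|=1$ is immediate: the lone vertex $v$ satisfies $p(N^+[\{v\}])=p(V)\ge p(V)/2$.

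For the inductive step, I would first invoke Lemma \ref{lem:matrix1} to pick a vertex $v$ with $p(N^-(v))\le p(N^+(v))$, and then recurse on $G':=G[N^o(v)]$. Write $W:=N^o(v)=V\sm(N^+[v]\cup N^-[v])$. If $p(W)>0$, the induction hypothesis applied to $G'$ (with $p$ restricted to $W$) yields a stable set $S'\subseteq W$ with $p(N^+_{G'}[S'])\ge p(W)/2$; if $p(W)=0$, simply take $S':=\emptyset$, for which the same inequality holds trivially. In either case set $S:=S'\cup\{v\}$. Since every vertex of $S'$ lies in $N^o(v)$ and hence is non-adjacent to $v$, the set $S$ is stable.

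The verification that $p(N^+_G[S])\ge p(V)/2$ rests on two observations. First, a disjointness point: $N^+_{G'}[S']\subseteq W$, because out-neighbours taken inside the induced subdigraph $G'$ stay in $W$, whereas $N^+_G[v]\subseteq N^+[v]\cup N^-[v]=V\sm W$; so these two sets are disjoint, and (using that out-neighbours in $G'$ are out-neighbours in $G$) both are contained in $N^+_G[S]$. Hence $p(N^+_G[S])\ge p(N^+_{G'}[S'])+p(N^+_G[v])\ge p(W)/2+p(N^+_G[v])$. Second, since $G$ is simple, $N^+(v)$ and $N^-(v)$ are disjoint, so $p(V\sm W)=p(v)+p(N^+(v))+p(N^-(v))\le p(N^+[v])+p(N^+(v))\le 2\,p(N^+_G[v])$, i.e. $p(N^+_G[v])\ge p(V\sm W)/2=(p(V)-p(W))/2$. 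Adding the two bounds gives $p(N^+_G[S])\ge p(W)/2+(p(V)-p(W))/2=p(V)/2$, as required.

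I expect the only delicate points to be bookkeeping rather than anything deep: one must keep straight that out-neighbourhoods computed in $G'$ are subsets of the corresponding out-neighbourhoods in $G$ (so the recursively found set still dominates the right vertices of $G$), ensure the two neighbourhoods being merged are disjoint (this is exactly what makes the two ``halves'' $p(W)/2$ and $p(V\sm W)/2$ add up, rather than each merely being $\ge p(V)/2$ on its own), and handle the degenerate case $p(W)=0$, where the induction hypothesis — which requires a strictly positive total weight — is unavailable and one falls back on $S=\{v\}$. With these in place the induction should close without surprises.
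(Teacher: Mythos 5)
Your proof is correct and follows essentially the same route as the paper's: induction on $|V|$, choosing $v$ via Lemma \ref{lem:matrix1}, recursing on $G[N^o(v)]$, and adding the two disjoint contributions $p(N^o(v))/2$ and $p(V\sm N^o(v))/2$. Your explicit handling of the degenerate case $p(N^o(v))=0$ (taking $S'=\emptyset$) is a small point the paper glosses over, but otherwise the arguments coincide.
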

\begin{proof}
We prove the lemma by induction on $|V|$. The lemma clearly holds for
$|V|=1$. For $|V|>1$, fix some function $p:V \to [0,1]$ and 
apply Lemma \ref{lem:matrix1} to obtain
a vertex $v$ such that $p(N^-(v)) \leq p(N^+(v))$.
If $N^o(v)=\emptyset$, then 
\begin{align*}
2 p(N^+[v]) & = 2p(N^+(v))+ p(v)\\
& \geq p(N^+(v))+p(N^-(v))+p(v)\\ & = p(V),
\end{align*}
 which proves the lemma.

If $N^o(v)\ne \emptyset$, we apply induction on $G[N^o(v)]$ to obtain
a stable set $S'$ such that $p(N_{G[N^o(v)]}^+[S']) \geq 
p(N^o(v))/2$. Let $S = S'\cup \{v\}$. We have the following remarks.
\begin{itemize}
\item $N_{G[N^o(v)]}^+[S'] = N^+[S']\cap N^o(v)$, and
\item $N^+[v]$ and $N^+[S']\cap N^o(v)$ are disjoint.
\end{itemize}
Thus, for the stable set $S$, we have
\begin{align*}
p(N^+[S]) &\ge p(N^+[v])+ p\big(N^+[S']\cap N^o(v)\big)\\
&= p(v) + p(N^+(v)) + p(N_{G[N^o(v)]}^+[S'])\\
& \geq p(v) + \big(p(N^+(v))+p(N^-(v)\big)/2 + p(N^o(v))/2\\ 
& = p(V)/2.
\end{align*}
This proves the lemma.
\end{proof}

In the first proof of Theorem \ref{thm:frac_dom}, we
will use the following linear program.
Let $\cs$ be the set of all maximal stable sets in
$G$, and let $A$ be the matrix with $|V|$ rows and $|\cs|$ columns,
where for every $v\in V,S\in \cs$,
\begin{eqnarray}A(v,S)=
\left\{ \begin{array}{ll}
 1  &\text{\  if $v\in N^+[S]$,}\\
0 &\text{\ otherwise.} \end{array} \right. \label{eqn:comment1}
\end{eqnarray}
Let us consider the following linear program
$$
\text{(P)}\ \ \ \ \ \ \ \ \left. \begin{array}{ll}
 \text{Minimize }   &\textbf{1}^Tz\\
\text{Subject to} & Az\ge \textbf{1}\text{\ and\ } z\ge \textbf{0},\end{array} \right.$$
and its dual 
$$
\text{(D)}\ \ \ \ \ \ \ \ 
\left. \begin{array}{ll}
 \text{Maximize }   &\textbf{1}^Tw\\
\text{Subject to} & A^Tw\le \textbf{1}\text{\ and\ } w\ge \textbf{0}.\end{array} \right.$$

\begin{lemma}\label{lem:stable_lp}
The value of an optimal solution for (P) is at most $2$.
\end{lemma}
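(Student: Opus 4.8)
The plan is to pass to the dual program (D) and show that \emph{every} feasible solution of (D) has objective value at most $2$; strong LP duality then yields the lemma, provided we first check that both (P) and (D) are feasible so that both optima are finite and equal. For (P), putting weight $1$ on every maximal stable set works: each vertex $v$ extends to some maximal stable set $S$, and then $v\in N^+[v]\subseteq N^+[S]$, so the corresponding row of $Az$ is at least $1$. For (D), $w\equiv \textbf{0}$ is feasible. Hence $\mathrm{opt}(P)=\mathrm{opt}(D)$ by strong duality.

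Unwinding the definition of $A$ in \eqref{eqn:comment1}, a feasible solution of (D) is precisely a function $w:V\to\mathbb{R}_{\ge 0}$ with $w(N^+[S])\le 1$ for every maximal stable set $S$, and its objective value is $w(V)$. The first step is to observe that any such $w$ actually maps into $[0,1]$: given $v\in V$, extend $\{v\}$ to a maximal stable set $S$; then $v\in N^+[v]\subseteq N^+[S]$, so $w(v)\le w(N^+[S])\le 1$. If $w(V)=0$ there is nothing to prove, so assume $w(V)>0$ and apply Lemma \ref{lem:matrix2} with $p=w$ to obtain a stable set $S_0\subseteq V$ with $w(N^+[S_0])\ge w(V)/2$.

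Finally, extend $S_0$ to a maximal stable set $S\supseteq S_0$. Since $N^+[S_0]\subseteq N^+[S]$ and $w\ge \textbf{0}$, feasibility of $w$ gives $1\ge w(N^+[S])\ge w(N^+[S_0])\ge w(V)/2$, so $w(V)\le 2$. Therefore $\mathrm{opt}(D)\le 2$, and by strong duality $\mathrm{opt}(P)\le 2$.

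The substantive part of this argument is already contained in Lemma \ref{lem:matrix2}; the remaining steps — verifying feasibility of both (P) and (D) so that strong duality is applicable, noting that feasible dual solutions are $[0,1]$-valued (so that Lemma \ref{lem:matrix2} applies), and using monotonicity of $N^+[\cdot]$ under taking supersets — are routine. The one point that needs care is that Lemma \ref{lem:matrix2} only guarantees a stable set, not a maximal one, which is exactly why the final extension to $S$ is required before invoking the constraint of (D).
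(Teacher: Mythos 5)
Your proof is correct and follows essentially the same route as the paper: pass to the dual (D), apply Lemma \ref{lem:matrix2} to a feasible dual solution $w$, and conclude $w(V)\le 2$ via strong duality. You are in fact slightly more careful than the paper on two minor points — checking that a feasible $w$ is $[0,1]$-valued so Lemma \ref{lem:matrix2} applies, and extending the stable set it produces to a \emph{maximal} one before invoking the dual constraint — but these are routine repairs and the argument is the same.
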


\begin{proof}
We prove that an optimal solution to (P) is at most 2, by proving that
the optimal solution to (D) is at most 2.  Then we apply the Strong
Duality Theorem~(see \cite{chong}, Theorem~17.2 for example) to
complete the proof.

Suppose for a contradiction that the optimal solution of (D) is
greater than 2. Then there is a function $w$ such that
$\textbf{1}^Tw >  2$ and $A^Tw\le\textbf{1}$. Then for every $S\in \cs$,
$\sum_{v\in V}A(v,S)w(v)\le 1$, and so by \eqref{eqn:comment1}, $$w(N^+[S])=\sum_{v\in
  N^+[S]}w(v) \le 1.$$
However, by Lemma~\ref{lem:matrix2}, there is
$S\in \cs$ such that $w(N^+[S]) \geq  w(V)/2 = (\textbf{1}^Tw)/2>1$.
This proves the lemma.\end{proof}

We now restate Theorem \ref{thm:frac_dom}.
\begin{theorem}\label{thm:frac dom2}
For any digraph $G=(V,E)$, we can construct a fractional dominating function $g:V
\to [0,1]$ such that $g(V) \leq 2 \alpha(G)$.
\end{theorem}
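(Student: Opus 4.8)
The plan is to feed an optimal solution of the linear program (P) into a vertex-weighting. By Lemma~\ref{lem:stable_lp}, (P) has an optimal solution $z^{*}$ with $\textbf{1}^{T}z^{*}\le 2$; concretely, $z^{*}$ assigns to each maximal stable set $S\in\cs$ a value $z^{*}_S\ge 0$ so that $\sum_{S\in\cs}z^{*}_S\le 2$ and, by the constraint $Az^{*}\ge\textbf{1}$ together with \eqref{eqn:comment1}, $\sum_{S:\,v\in N^+[S]}z^{*}_S\ge 1$ for every $v\in V$. The idea is to transfer the weight of each stable set onto the set itself: define $h(v)=\sum_{S\in\cs:\,v\in S}z^{*}_S$ and then truncate, $g(v)=\min\{1,h(v)\}$, so that $g:V\to[0,1]$.

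For the size bound, note $g(V)\le h(V)=\sum_{v\in V}\sum_{S\ni v}z^{*}_S=\sum_{S\in\cs}z^{*}_S\,|S|$, and since every stable set of $G$ has at most $\alpha(G)$ vertices this is at most $\alpha(G)\sum_{S}z^{*}_S\le 2\alpha(G)$ by Lemma~\ref{lem:stable_lp}. It remains to check that $g$ is a fractional dominating function, i.e.\ that $g(N^-[v])\ge 1$ for every $v\in V$. Fix $v$. If some $u\in N^-[v]$ has $h(u)\ge 1$, then $g(u)=1$, so $g(N^-[v])\ge g(u)=1$. Otherwise $g$ agrees with $h$ on $N^-[v]$, and $g(N^-[v])=h(N^-[v])=\sum_{u\in N^-[v]}\sum_{S\ni u}z^{*}_S=\sum_{S\in\cs}z^{*}_S\,|S\cap N^-[v]|\ge\sum_{S:\,S\cap N^-[v]\neq\emptyset}z^{*}_S$. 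The key observation is that $S\cap N^-[v]\neq\emptyset$ says exactly that $v\in S$ or $v$ has an in-neighbor in $S$, i.e.\ $v\in N^+[S]$, i.e.\ $A(v,S)=1$; hence the last sum equals $(Az^{*})_v\ge 1$, as desired.

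The only subtlety is the truncation step: a priori $h(v)$ can exceed $1$, since a vertex may lie in several maximal stable sets carrying positive weight, so one cannot simply take $g=h$. The case analysis above shows that clipping to $1$ is harmless — whenever a term over $N^-[v]$ gets clipped it is clipped to exactly $1$, which already meets the constraint by itself. (Alternatively, one could bypass this by appealing to the footnote in Section~\ref{sec:intro} and exhibiting the non-minimal function $h\ge 0$ with $h(N^-[v])\ge 1$ for all $v$.) Combining the displays produces a fractional dominating function $g:V\to[0,1]$ with $g(V)\le 2\alpha(G)$, establishing Theorem~\ref{thm:frac dom2} and hence Theorem~\ref{thm:frac_dom}.
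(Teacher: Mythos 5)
Your proposal is correct and follows essentially the same route as the paper's first proof: take the solution $z$ of (P) guaranteed by Lemma~\ref{lem:stable_lp}, push the weight of each maximal stable set onto its vertices via $g(v)=\sum_{S\ni v}z(S)$, and verify $g(V)\le 2\alpha(G)$ and $g(N^-[v])\ge\sum_{S:\,v\in N^+[S]}z(S)\ge 1$. Your explicit truncation $g=\min\{1,h\}$ is a careful handling of the issue the paper delegates to the footnote in Section~\ref{sec:intro}, and your case analysis for it is sound.
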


\begin{proof}[First proof of Theorem \ref{thm:frac dom2}]
Invoking Lemma \ref{lem:stable_lp}, there is a 
$z$ such that $Az\ge \textbf{1}$ and $\textbf{1}^Tz \leq
2$. Note that $z$ is a vector of length $|\cs|$ and $w$ is a vector of
length $|V|$. Let $g(v):= \sum_{S: v\in S}z(S)$ for every $v\in
V$. Let $\alpha = \alpha(G)$.

Note that for every $S\in \cs$, $|S|\le \alpha$  since $S$ is stable. We have
$$g(V)= \sum_{v\in V}\sum_{S: v\in S}z(S)=\sum_{S\in \cs}\sum_{v\in
  S}z(S)=\sum_{S\in \cs}|S|z(S)\le \alpha \sum_{S\in \cs}z(S)= \alpha
(\textbf{1}^Tz) \leq 2\alpha.$$

Fix $v$, since $Az\ge \textbf{1}$, we have $\sum_{S\in
  \cs}A(v,S)z(S)\ge 1$. In other words,
$$\sum_{S\in \cs:v\in N^+[S]}z(S)\ge 1.$$
Besides,
$$g(N^-[v])=\sum_{x:v\in N^+[x]}g(x)=\sum_{x:v\in N^+[x]}\sum_{S:x\in
  S}z(S)\ge \sum_{S\in \cs:v\in N^+[S]}z(S).$$ Thus, $g(N^-[v])\ge 1$
for every $v\in V$ (i.e., $g(v)$ is a fractional dominating function of
$G$). This proves the theorem.
\end{proof}

In the second proof of Theorem \ref{thm:frac dom2}, we will use the
following consequence of Farkas' Lemma (we refer the reader to
\cite{AL} for the proof of Lemma~\ref{lem:farkas}; see also Theorem 1
in \cite{fisher1996squaring}).
\begin{lemma}\label{lem:farkas}
For any digraph $G=(V,E)$, there exists a function $p:V\to [0,1]$ such that $p(V)=1$ and $p(N^-(v))\ge p(N^+(v))$ for every vertex $v$.
\end{lemma}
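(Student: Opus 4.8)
The plan is to recast the lemma as a purely linear-algebraic statement about a skew-symmetric matrix, and then to obtain the required distribution from the minimax theorem for zero-sum games (which is itself an avatar of Farkas' Lemma / LP duality, matching the cited references). First I would let $A$ denote the adjacency matrix of $G$, so that $A(x,v)=1$ exactly when $xv\in E$. Since $G$ is simple and loopless, between any two vertices there is at most one arc and the diagonal of $A$ vanishes; hence the matrix $B:=A^T-A$ has entries in $\{-1,0,1\}$ and is skew-symmetric. For any $p:V\to\mathbb{R}$ one checks that $p(N^-(v))=(A^Tp)_v$ and $p(N^+(v))=(Ap)_v$, so the requirement $p(N^-(v))\ge p(N^+(v))$ for every $v$ is exactly the vector inequality $Bp\ge\mathbf{0}$. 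Thus it suffices to produce a probability vector $p\ge\mathbf{0}$ with $\mathbf{1}^Tp=1$ and $Bp\ge\mathbf{0}$; note that $p\ge\mathbf{0}$ together with $\mathbf{1}^Tp=1$ forces $p\le\mathbf{1}$ automatically, so the codomain $[0,1]$ comes for free.

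To build such a $p$, I would consider the zero-sum matrix game with payoff matrix $C:=-B=A-A^T$, in which the row and column players each choose a probability distribution over $V$ and the payoff to the row player is $x^TCy$. The crucial feature is that $C$ is skew-symmetric, which forces the value of the game to be $0$: on one hand $\max_x\min_y x^TCy\le \max_x(x^TCx)=0$ (take $y=x$ in the inner minimum), and on the other $\min_y\max_x x^TCy\ge \min_y(y^TCy)=0$ (take $x=y$ in the inner maximum), using that $z^TCz=0$ for every $z$ because $C$ is skew-symmetric. By the minimax theorem these two quantities coincide, so the common value is $0$.

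Now I would extract the distribution from an optimal strategy of the column player. Let $q$ be a column-optimal probability vector, so that $x^TCq\le 0$ for every probability vector $x$; letting $x$ range over the standard basis vectors gives $(Cq)_v\le 0$ for all $v$, i.e.\ $Cq\le\mathbf{0}$. Since $C=-B$, this reads $Bq\ge\mathbf{0}$, and $q$ is by construction nonnegative with $\mathbf{1}^Tq=1$. Setting $p:=q$ therefore yields exactly the function required by the lemma.

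The only delicate points are bookkeeping ones, and I expect the main obstacle to be keeping the sign conventions consistent: one must start the game from $C=-B$ (rather than $B$) so that the column player's guarantee $Cq\le\mathbf{0}$ translates into the desired $Bq\ge\mathbf{0}$, and one must verify that the value is attained (so that the guarantee is a genuine $\le 0$, not merely an infimum). An alternative that avoids game-theoretic language would apply Farkas' Lemma directly to the feasibility of $\{\, p\ge\mathbf{0},\ Bp\ge\mathbf{0},\ \mathbf{1}^Tp=1 \,\}$, deriving from infeasibility a dual certificate whose contradiction again hinges on the identity $z^TBz=0$ for skew-symmetric $B$; this is the route underlying the references the paper cites.
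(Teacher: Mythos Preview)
Your argument is correct. Note, however, that the paper does not actually prove this lemma in the text: it simply cites \cite{AL} and \cite{fisher1996squaring} for the proof. Your route via the minimax theorem applied to the skew-symmetric matrix $C=A-A^{T}$ is precisely the standard argument underlying those references (Fisher's proof in particular extracts an optimal mixed strategy from the symmetric game on the tournament's skew-adjacency matrix), so your proposal is both correct and in line with what the paper defers to.
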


\begin{proof}[Second Proof of Theorem \ref{thm:frac dom2}]
We prove the theorem by induction on $\alpha(G)$. If $\alpha(G)=1$,
then $G$ is a tournament.  Let $p$ be a function satisfying
Lemma~\ref{lem:farkas}.  Let $g(v)=2p(v)$ for every $v \in V$. Then
$g(V)=2$ and for every $v$, we have
$$g(N^-[v])=2p(N^-[v]) \ge 2p(v) +p(N^-(v))+p( N^+(v)) \ge p(V) =1.$$
Thus, $g$ is a fractional domination function of $G$. We conclude that $\gamma^*(G)\le g(V)=  2=2\alpha(G)$ for the case $\alpha(G)=1$. 

If $\alpha(G)>1$, let $p$ be a function satisfying Lemma
\ref{lem:farkas}. By Observation~\ref{ob:indep}, we have
$\alpha(G[N^o(v)])\le \alpha(G)-1$ for every $v\in V$, and so
$\gamma^*(G[N^o(v)])\le 2(\alpha(G) - 1)$ by induction.  In the rest
of the proof, we write $G_v:=G[N^o(v)]$ for short.  For each vertex
$v$, let $g_v$ be a minimum fractional dominating function of
$G_v$. Set $g(x)=2p(x)+\sum_{y\in N^o(x)}g_y(x)p(y)$ for every vertex
$x$.  We show that $g$ is a fractional dominating function of $G$.
Note that $x\in N^o(y)$ if and only if $y\in N^o(x)$, and for every
$v,y$ with $v\in G_y$, $$\sum_{x\in N_{G_y}^-[v]}
g_y(x)=g_y(N_{G_y}^-[v])\ge 1$$ since $g_y$ is a fractional dominating
function of $G_y$.  Fix $v$, we have (we omit the subscript $G$ if
applicable)
\begin{align*}
g(N^-[v]) = \sum_{x\in N^-[v]}g(x) &=\sum_{x\in N^-[v]}\bigg( 2p(x) +\sum_{y\in N^o(x)} g_y(x)p(y)\bigg)\\
&=2\sum_{x\in N^-[v]}p(x) +\sum_{x\in N^-[v]}\sum_{y\in N^o(x)} g_y(x)p(y)\\
&=  2p(N^-[v])+\sum_{y\in V}p(y)\sum_{x\in N^-[v]\cap N^o(y)} g_y(x)\\
&\ge  2p(N^-[v])+\sum_{y\in N^o(v)}p(y)\sum_{x\in N^-[v]\cap N^o(y)} g_y(x)\\
&=  2p(N^-[v])+\sum_{y\in N^o(v)}p(y)\sum_{x\in N_{G_y}^-[v]} g_y(x)\\
&\ge  2p(N^-[v])+\sum_{y\in N^o(v)}p(y)\cdot 1\\
&=  2p(N^-[v])+p(N^o(v))\\
& \ge 2p(v) +p(N^-(v))+p(N^+(v))+p(N^o(v))\\
&\ge p(V),\\
&=1.
\end{align*}
We conclude that $g$ is a fractional dominating
function of $G$.

Note that $g_v$ is a minimum fractional dominating function of $g_v$, and so $g_v(N^o(v))=\gamma^*(G_v)\le 2\alpha(G_v)\le 2(\alpha(G)-1)$. Hence
\begin{align*}
g(V)=\sum_{v\in V}g(v) &=2\sum_{v\in V}p(v) + \sum_{v\in V}\sum_{y\in N^o(v)}g_y(v)p(y)\\
&=2+\sum_{y\in V}p(y)\sum_{v\in N^o(y)}g_y(v)\\
&\le 2+\sum_{y\in V}p(y)(2\alpha(G)-2)\\
&=2\alpha(G).
\end{align*}  
Thus, $\gamma^*(G)\le \sum_{v\in V}g(v)\le 2\alpha(G)$. This completes the proof.
\end{proof}


We can show that the bound in Theorem~\ref{thm:frac dom2} is sharp. 
\begin{proposition}
Given an arbitrarily small positive real number $\varepsilon$, for any positive integer $k$, there exists a digraph $G$ such that $\alpha(G)=k$ and $\gamma^*(G)>2k-\varepsilon$.
\end{proposition}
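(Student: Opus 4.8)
The plan is to reduce to the tournament case $k=1$ and then take a disjoint union. For the tournament case I would use a \emph{regular} tournament: for odd $m$, let $T_m$ be the circulant tournament on vertex set $\mathbb{Z}_m$ with an arc $i\to j$ exactly when $(j-i)\bmod m\in\{1,\dots,(m-1)/2\}$. One checks at once that $T_m$ is a simple tournament in which every vertex has out-degree $(m-1)/2$, so $|N^+[v]|=(m+1)/2$ for all $v$ (and uniform weight $\tfrac{2}{m+1}$ is a fractional dominating function, so in fact $\gamma^*(T_m)=\tfrac{2m}{m+1}$, which is why $2$ is the right constant). Now let $G$ be the disjoint union of $k$ vertex-disjoint copies of $T_m$, with no arcs between distinct copies; put $n:=km=|V(G)|$.

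First, $\alpha(G)=k$: a stable set of $G$ meets each copy of $T_m$ in a stable set of a tournament, hence in at most one vertex, and choosing one vertex from each copy gives a stable set of size exactly $k$.

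Next I would lower-bound $\gamma^*(G)$ by double counting. Let $g$ be any fractional dominating function of $G$, so $g(N^-[v])\ge 1$ for every $v\in V(G)$. Since there are no arcs between copies, $N^-[v]$ lies inside $v$'s own copy, and a fixed vertex $x$ belongs to $N^-[v]$ precisely for the $|N^+[x]|=(m+1)/2$ vertices $v\in N^+[x]$. Summing the inequalities over all $v$ gives
\[
n=\sum_{v\in V(G)}1\le \sum_{v\in V(G)}g(N^-[v])=\sum_{x\in V(G)}|N^+[x]|\,g(x)=\frac{m+1}{2}\,g(V(G)),
\]
so $g(V(G))\ge \frac{2n}{m+1}=\frac{2km}{m+1}=2k-\frac{2k}{m+1}$. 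As this holds for every fractional dominating function, $\gamma^*(G)\ge 2k-\frac{2k}{m+1}$. Finally I would pick the odd integer $m$ large enough that $\frac{2k}{m+1}<\varepsilon$ (any odd $m>2k/\varepsilon$ works); then $\alpha(G)=k$ and $\gamma^*(G)>2k-\varepsilon$, as desired.

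There is no genuine obstacle here; the only points requiring care are (i) choosing a gadget for which the elementary double-counting bound is essentially tight, which is exactly what regularity of $T_m$ provides (the constant $2$ is $\lim_m |V|/|N^+[v]|$), and (ii) observing that passing to the disjoint union of $k$ copies scales both $\alpha$ and the double-counting estimate linearly, so the single-tournament bound lifts to all $k$. (Equivalently, one can phrase the lower bound through LP duality, using the uniform weight $\tfrac{2}{m+1}$ on every vertex as a feasible solution of the packing dual with total value $\tfrac{2km}{m+1}$.)
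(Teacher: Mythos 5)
Your construction is exactly the paper's gadget: the circulant (Eulerian, regular) tournament on an odd number of vertices, followed by a disjoint union of $k$ copies, and both steps ($\alpha(G)=k$, additivity over components) are handled the same way. The only genuine difference is how the lower bound on $\gamma^*$ is extracted. The paper picks a vertex $v_r$ of minimum weight, covers the vertex set by the two closed in-neighborhoods $N^-[v_r]$ and $N^-[v_{2r-1}]$, and subtracts the (provably small) weight of the doubly-counted minimum vertex, giving $\gamma^*>2-\varepsilon$ directly. You instead sum the constraints $g(N^-[v])\ge 1$ over all vertices and use regularity to get $\gamma^*(G)\ge n/(d+1)$ with $d=(m-1)/2$; this is precisely the degree bound the paper proves later as a separate proposition (for the random-tournament result) but does not invoke here. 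Your route is slightly cleaner and quantitatively sharper (it pins down $\gamma^*(T_m)=2m/(m+1)$ exactly, since the uniform weight $2/(m+1)$ is feasible), whereas the paper's two-neighborhood trick only needs a choice of two vertices whose closed in-neighborhoods cover the tournament, so it is a touch more hand-crafted. Both are correct; your double counting is the more reusable argument.
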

\begin{proof}
Let $r=\lceil 1/\varepsilon \rceil+1$. For $k=1$, let $G$ be an
Eulerian tournament with $V(G)=\{v_1,...,v_{2r-1}\}$ and
$N^+(v_i)=\{v_j:1\le j-i\mod (2r-1)\le r-1\}$. Let $g$ be a minimum
fractional dominating function of $G$. Suppose that
$g(v_r)=\min_{i}g(v_i)$. Note that $\sum_{i}g(v_i)=\gamma^*(G)\le 2$,
and $r$ is chosen sufficiently large so that $g(v_r)<2/(2r-1)<\varepsilon$. We have
\begin{align*}
\gamma^*(G)&=\sum_{i}g(v_i),\\
&=\sum_{1\le i\le r}g(v_i)+\sum_{r\le i\le 2r-1}g(v_i)-g(v_r)\\
&=\sum_{v_i\in N^-[v_r]}g(v_i)+\sum_{v_i\in N^-[v_{2r-1}]}g(v_i)-g(v_r)\\
&> 2-\varepsilon.
\end{align*}

For $k>1$, let $r=\lceil k/\varepsilon \rceil+1$.  Let $G$ be a
disjoint union of $k$ tournaments $G_1,\dots, G_k$, each is constructed
as in the case $k=1$. Since the $k$ tournaments are disjoint,
$\gamma^*(G)=\sum_{i}\gamma^*(G_i)\ge
k(2-\varepsilon/k)=2k-\varepsilon.$
\end{proof}

We can also show that almost surely a random tournament has a
fractional domination number close to the upper bound of 2.  First, we
need the following proposition.

\begin{proposition}
Let $G=(V,E)$ be a digraph of maximum out-degree $d$, then $\gamma^*(G)\ge n/(d+1)$.
\end{proposition}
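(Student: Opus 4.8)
The plan is to use a straightforward averaging (equivalently, weak LP duality) argument. Let $g:V\to[0,1]$ be an arbitrary fractional dominating function of $G$; it suffices to show $g(V)\ge n/(d+1)$, since $\gamma^*(G)$ is the infimum of $g(V)$ over all such $g$.

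By definition, $g(N^-[v])=\sum_{x\in N^-(v)\cup\{v\}}g(x)\ge 1$ for every $v\in V$. Summing these $n$ inequalities and exchanging the order of summation, the first step is to observe that each vertex $x$ is counted, with weight $g(x)$, exactly once for each $v$ such that $x\in N^-[v]$, i.e., exactly $|N^+[x]|$ times. Since $|N^+[x]|=|N^+(x)|+1\le d+1$, this yields
\[
n\le \sum_{v\in V}g(N^-[v])=\sum_{x\in V}g(x)\,|N^+[x]|\le (d+1)\sum_{x\in V}g(x)=(d+1)\,g(V).
\]
Rearranging gives $g(V)\ge n/(d+1)$, and taking the infimum over all fractional dominating functions $g$ completes the proof.

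There is essentially no obstacle here; the only point requiring (minimal) care is the double-counting identity $\sum_{v\in V}g(N^-[v])=\sum_{x\in V}g(x)|N^+[x]|$, which follows because $x\in N^-[v]\iff v\in N^+[x]$. Alternatively, one could phrase the same computation as exhibiting a feasible solution $w\equiv 1/(d+1)$ to the dual packing LP and invoking weak duality, but the direct summation above is the most transparent route.
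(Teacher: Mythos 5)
Your proof is correct and uses essentially the same double-counting argument as the paper: summing $g(N^-[v])\ge 1$ over all $v$ and observing that each $g(x)$ is counted $|N^+[x]|\le d+1$ times. The only cosmetic difference is that the paper phrases it as a proof by contradiction while you argue directly.
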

\begin{proof}
Suppose that the statement was false, then there is a function $g:V\to
[0,1]$ such that $g(V)<n/(d+1)$ and $g(N^-[x])\ge 1$ for every $x$.
Then

\begin{align*}
n\le \sum_{x\in V}g(N^-[x]) &= g(V) + \sum_{x\in V}g(N^-(x))\\
& = g(V) + \sum_{ux\in E}g(u)\\
& = g(V)+ \sum_{u\in V}g(u)|N^+(u)| \\
&\leq g(V) + d\cdot g(V)\\
& = (d+1)g(V) < n,
\end{align*}
a contradiction.
\end{proof}

We also need Chernoff's inequality (see for example \cite{MR02}).

\begin{proposition}[Chernoff's Inequality]
Let $X$ be a binomial random variable consisting of $n$ Bernoulli trials, each with probability of success $p$.
Then, for all $0 < \epsilon < 1$, $$ Pr[|X- np| > \epsilon np] \leq 2 e^{-\epsilon^2 np /3}.$$ 
\end{proposition}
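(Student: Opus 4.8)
The plan is to note first that this is a completely standard concentration inequality, so one acceptable option is simply to invoke \cite{MR02}; what follows is how I would give a self-contained argument via the exponential moment (Chernoff--Bernstein) method. Write $X=\sum_{i=1}^{n}X_i$ with the $X_i$ independent Bernoulli random variables of mean $p$, and observe that $\{\,|X-np|>\epsilon np\,\}$ is contained in the union of the one-sided events $\{X\ge(1+\epsilon)np\}$ and $\{X\le(1-\epsilon)np\}$. By the union bound it then suffices to bound the probability of each of these by $e^{-\epsilon^2 np/3}$ and add the two bounds.

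For the upper tail I would apply Markov's inequality to the nonnegative random variable $e^{tX}$ with a parameter $t>0$: $\Pr[X\ge(1+\epsilon)np]\le e^{-t(1+\epsilon)np}\,\mathbb{E}[e^{tX}]$. Independence gives $\mathbb{E}[e^{tX}]=\prod_{i=1}^{n}\mathbb{E}[e^{tX_i}]=\big(1+p(e^{t}-1)\big)^{n}\le e^{np(e^{t}-1)}$, using $1+x\le e^{x}$. Hence $\Pr[X\ge(1+\epsilon)np]\le\exp\!\big(np\,(e^{t}-1-t(1+\epsilon))\big)$, and the choice $t=\ln(1+\epsilon)$, which minimizes the exponent, turns this into $\exp\!\big(np\,(\epsilon-(1+\epsilon)\ln(1+\epsilon))\big)$. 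The lower tail is symmetric: apply Markov to $e^{-tX}$ and optimize at $t=-\ln(1-\epsilon)>0$ to obtain $\Pr[X\le(1-\epsilon)np]\le\exp\!\big(np\,(-\epsilon-(1-\epsilon)\ln(1-\epsilon))\big)$.

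The only remaining step --- and the only place any real computation enters --- is the calculus estimate that for all $\epsilon\in(0,1)$ one has $\epsilon-(1+\epsilon)\ln(1+\epsilon)\le-\epsilon^2/3$ and $-\epsilon-(1-\epsilon)\ln(1-\epsilon)\le-\epsilon^2/2\le-\epsilon^2/3$. Each of these reduces to checking that an explicit one-variable function vanishes at $\epsilon=0$ and has nonnegative derivative on $(0,1)$ (equivalently, to comparing the Taylor series of $\ln(1\pm\epsilon)$ term by term), and the constant $3$ appearing in the exponent is dictated precisely by the tighter of the two, namely the upper-tail bound. Substituting these estimates into the two displays above and summing yields $\Pr[|X-np|>\epsilon np]\le 2e^{-\epsilon^2 np/3}$, as claimed. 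I expect the bookkeeping in this last elementary lemma to be the only mild obstacle; the structure of the argument is otherwise entirely routine.
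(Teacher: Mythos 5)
Your proof is correct, and it is worth noting that the paper itself does not prove this proposition at all: it simply cites \cite{MR02}, which is exactly the first option you mention. Your self-contained argument is the standard exponential-moment (Chernoff) method, and every step checks out: the union bound over the two one-sided tails, the bound $\mathbb{E}[e^{tX}]\le e^{np(e^t-1)}$ via $1+x\le e^x$, the optimal choices $t=\ln(1+\epsilon)$ and $t=-\ln(1-\epsilon)$, and the two calculus facts $(1+\epsilon)\ln(1+\epsilon)-\epsilon\ge\epsilon^2/3$ and $\epsilon+(1-\epsilon)\ln(1-\epsilon)\ge\epsilon^2/2$ on $(0,1)$ (the first of these needs a second-derivative argument since the relevant first derivative $\ln(1+\epsilon)-2\epsilon/3$ is not monotone, but it is indeed nonnegative on $(0,1)$, so your reduction is sound). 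The only substantive difference from the paper is that you supply the proof the paper delegates to a reference.
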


\begin{proposition} 
 For any $\varepsilon > 0$, $\mathbb{P}[\gamma^{*}(T_n) > 2 -
   \varepsilon] = 1-o(1)$ (i.e., $\gamma^{*}(T_n) > 2 - \varepsilon$ almost surely).
\end{proposition}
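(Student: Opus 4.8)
The plan is to combine the two propositions just proved: the lower bound $\gamma^*(G)\ge n/(d+1)$ in terms of the maximum out-degree $d$, together with a Chernoff estimate showing that a random tournament has maximum out-degree close to $n/2$ with high probability. First I would fix $\varepsilon>0$ and choose a small $\delta>0$, depending only on $\varepsilon$, such that $\tfrac{2}{1+\delta}>2-\varepsilon$, leaving a bit of slack to absorb lower-order terms in $n$.

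Next, for each vertex $v$ of $T_n$, its out-degree $\deg^+(v)$ is a binomial random variable consisting of $n-1$ Bernoulli trials each with success probability $1/2$, so $\mathbb{E}[\deg^+(v)]=(n-1)/2$. Applying Chernoff's inequality with parameter $\delta$ gives $\Pr\big[\deg^+(v)>(1+\delta)(n-1)/2\big]\le 2e^{-\delta^2(n-1)/6}$. Taking a union bound over the $n$ vertices, the probability that some vertex has out-degree exceeding $(1+\delta)(n-1)/2$ is at most $2n\,e^{-\delta^2(n-1)/6}=o(1)$.

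On the complementary event, which holds with probability $1-o(1)$, the maximum out-degree $d$ of $T_n$ satisfies $d\le(1+\delta)(n-1)/2$, so the preceding proposition yields
$$\gamma^*(T_n)\ge \frac{n}{d+1}\ge \frac{n}{(1+\delta)(n-1)/2+1}.$$
As $n\to\infty$ the right-hand side tends to $\tfrac{2}{1+\delta}$, which exceeds $2-\varepsilon$ by the choice of $\delta$; hence for all sufficiently large $n$ we have $\gamma^*(T_n)>2-\varepsilon$ on this event, giving $\mathbb{P}[\gamma^*(T_n)>2-\varepsilon]=1-o(1)$.

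There is no genuine obstacle here; the only care needed is bookkeeping of constants, namely choosing $\delta$ so that the limiting value $\tfrac{2}{1+\delta}$ stays above $2-\varepsilon$ after accounting for the $n/(n-1)$ correction and the $+1$ in the denominator, all of which vanish as $n\to\infty$.
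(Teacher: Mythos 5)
Your proposal is correct and follows essentially the same route as the paper: bound the maximum out-degree of $T_n$ via Chernoff's inequality plus a union bound, then invoke the lower bound $\gamma^*(G)\ge n/(d+1)$. The only cosmetic difference is that you use a multiplicative deviation $(1+\delta)(n-1)/2$ where the paper uses the additive form $n/2+10\sqrt{n\log n}$; both yield the same conclusion.
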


\begin{proof}
By Chernoff's bound, the probability
that a given vertex has out-degree more than $ n/2 + 10 \sqrt{n \log n}$ is 
$O(n^{-C})$ for some constant $C > 1$. Thus, the probability that there is a vertex
with out-degree more than $ n/2 + 10 \sqrt{n \log n}$ is, by the union bound, at most 
$n \cdot O(n^{-C}) = o(1)$. Thus, almost surely all the vertices
of $T_n$ have out-degree at most $ n/2 + 10 \sqrt{n \log n}$. 
It follows that almost surely  $\gamma^{*}(T_n) \geq \tfrac{n}{n/2 + 10 \sqrt{n \log n}}.$
\end{proof}


\section{Dominating sets in digraphs}\label{sec:dom-up}

In the previous section, we showed that the fractional domination
number of a digraph can be bounded from above by twice its
independence number.  In general, we cannot bound the (integral) domination
number of a digraph in terms of its independence number, as mentioned towards
the end of the introduction section.
Nevertheless, these two quantities can be related.

It is well known that a tournament has a dominating set of size at most
$\log{n}$~\cite{MV88}.  Analogously, we can show that a digraph
$G=(V,E)$ has a dominating set of size at most $\alpha(G) \cdot
\log{n}$.  For $S \subseteq V$, let $\chi(S)$ denote the chromatic
number of the underlying (undirected) graph of digraph $G[S]$.

\begin{lemma}\label{thm:bound1}
Every digraph $G=(V,E)$ with $n=|V|$ has a dominating set $D
\subseteq V$ such that $\chi(D) \leq \log{n}$.
\end{lemma}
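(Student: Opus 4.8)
The plan is to mimic the classical greedy argument that gives a tournament a dominating set of size $\log n$, but to iterate by \emph{color classes} rather than by single vertices. The key sub-claim I would establish first is: in any digraph $G$ on $m$ vertices, there is an independent set $I$ (equivalently, a single ``color class'') whose closed out-neighborhood $N^+[I]$ has size at least $m/2$. This is exactly the content of Lemma \ref{lem:matrix2} applied with the uniform weighting $p \equiv 1$: it yields a stable set $S$ with $|N^+[S]| = p(N^+[S]) \ge p(V)/2 = m/2$. So I get this sub-claim for free from the earlier lemma.

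Next I would run the following iteration. Set $V_0 = V$. Having defined $V_i$, apply the sub-claim to $G[V_i]$ to obtain a stable set $S_i \subseteq V_i$ with $|N^+_{G[V_i]}[S_i]| \ge |V_i|/2$; then let $V_{i+1} = V_i \setminus N^+_{G[V_i]}[S_i]$, so $|V_{i+1}| \le |V_i|/2$. After $t = \lceil \log n \rceil$ steps we have $|V_t| < 1$, i.e. $V_t = \emptyset$, so $V = \bigcup_{i=0}^{t-1} N^+_{G[V_i]}[S_i]$. I claim $D := S_0 \cup S_1 \cup \dots \cup S_{t-1}$ is a dominating set of $G$: any vertex $v$ lies in some $V_i \setminus V_{i+1} = N^+_{G[V_i]}[S_i]$, so either $v \in S_i \subseteq D$ or $v$ is an out-neighbor in $G$ of some vertex of $S_i \subseteq D$ (out-neighbors in the induced subgraph are out-neighbors in $G$). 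Finally, $\chi(D) \le t \le \log n$, because $D$ is covered by the $t$ stable sets $S_0, \dots, S_{t-1}$, each of which is an independent set in the underlying graph of $G[D]$ as well (a stable set stays stable in any induced subgraph), giving a proper $t$-coloring of $G[D]$.

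The only mild technical point is bookkeeping with the logarithm base and the termination: if $\log$ is base $2$ then $t=\lceil \log n\rceil$ halving steps bring the count below $1$; if the paper intends natural log the constant changes but the argument is identical, so I would just keep the statement as $\chi(D)\le\log n$ and note the base-$2$ convention (consistent with the ``$\log n$ for tournaments'' fact cited from \cite{MV88}, where a tournament is the $\alpha=1$ case and each $S_i$ is a single vertex). I do not anticipate a genuine obstacle here — everything reduces to Lemma \ref{lem:matrix2} plus the elementary observation that a union of $t$ stable sets has chromatic number at most $t$; the main thing to be careful about is stating precisely that passing to induced subgraphs preserves both stability of $S_i$ and the out-neighbor relation, so that $D$ genuinely dominates $G$ and genuinely has $\chi(D)\le t$.
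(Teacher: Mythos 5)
Your proposal is correct and is essentially the paper's own proof: apply Lemma \ref{lem:matrix2} with $p\equiv 1$ to extract a stable set whose closed out-neighborhood covers half the remaining vertices, delete that neighborhood, and iterate $\log n$ times, so that $D$ is a union of $\log n$ stable sets and hence $\chi(D)\le\log n$. Your additional bookkeeping (preservation of stability and of the out-neighbor relation under induced subgraphs) is sound and just makes explicit what the paper leaves implicit.
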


\begin{proof}
We can assign each vertex a value $p(v) = 1$ and apply
Lemma \ref{lem:matrix2} to find a stable set $S$ such that $p(N^+[S])
\geq p(V)/2$.  We add the stable set $S$ to the dominating set and
recurse on the induced subgraph $G[V\setminus{N^+[S]}]$.  
Performing this routine $\log{n}$ times results in
the bound.
\end{proof}

Since each stable set $S$ has cardinality at most $\alpha(G)$, Lemma
\ref{thm:bound1} implies the following Theorem.
\begin{theorem}\label{thm:first}
Every digraph $G=(V,E)$ with $n = |V|$ has a dominating set of
size at most $\alpha(G) \cdot \log{n}$.
\end{theorem}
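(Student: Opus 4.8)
The plan is to deduce Theorem~\ref{thm:first} directly from Lemma~\ref{thm:bound1}, which has already done essentially all the work: it produces a dominating set $D$ whose induced underlying graph has chromatic number at most $\log n$. The only remaining point is to convert this chromatic-number bound into a cardinality bound, and this is where the independence number enters.

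First I would recall the classical fact that in any (undirected) graph $H$ on $m$ vertices, $m \leq \chi(H)\cdot\alpha(H)$: a proper colouring with $\chi(H)$ colours partitions $V(H)$ into $\chi(H)$ colour classes, each of which is a stable set and hence has size at most $\alpha(H)$. Applying this to the underlying graph of $G[D]$, we get $|D| \leq \chi(D)\cdot\alpha(G[D])$. Since $G[D]$ is an induced subgraph of $G$, we have $\alpha(G[D]) \leq \alpha(G)$, and by Lemma~\ref{thm:bound1} we have $\chi(D) \leq \log n$. Combining these gives $|D| \leq \alpha(G)\cdot\log n$, which is exactly the claimed bound, and $D$ is a dominating set, so we are done.

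There is really no substantial obstacle here; the ``hard part,'' such as it is, was already dispatched in Lemma~\ref{thm:bound1} via the repeated application of Lemma~\ref{lem:matrix2} with the all-ones weighting. One minor point worth stating carefully is why the recursion in Lemma~\ref{thm:bound1} terminates after $\log n$ rounds: each round removes at least half the remaining vertices (since $p(N^+[S]) \geq p(V)/2$ when $p\equiv 1$ means at least half of the live vertices get dominated), so after $\log n$ rounds fewer than one vertex remains, i.e. none. The stable sets chosen in the successive rounds are pairwise disjoint and their union is $D$; assigning colour $i$ to the $i$-th stable set gives a proper colouring of the underlying graph of $G[D]$ with at most $\log n$ colours, which is an alternative route to the same conclusion and avoids even invoking the generic $m \leq \chi\cdot\alpha$ inequality. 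Either phrasing yields $|D|\le \alpha(G)\log n$ immediately.
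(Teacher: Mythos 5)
Your proof is correct and follows the paper's route exactly: the paper derives Theorem~\ref{thm:first} from Lemma~\ref{thm:bound1} by noting that the dominating set constructed there is a union of at most $\log n$ stable sets, each of size at most $\alpha(G)$, which is precisely your second phrasing (and your first phrasing via $m\le\chi\cdot\alpha$ is an equivalent repackaging). Nothing is missing.
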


When is it possible to bound the domination number of a digraph purely
in terms of its independence number?  For example, Theorem
\ref{thm:first} implies that this can be done when the independence
number of a digraph is sufficiently large.

\begin{theorem}\label{thm:large-alpha1}
For every digraph $G=(V,E)$ with $n = |V|$, if $\alpha(G) \geq \log{n}$,
then $\gamma(G) \leq (\alpha(G))^2$.
\end{theorem}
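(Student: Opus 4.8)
The plan is to simply chain together Theorem~\ref{thm:first} with the hypothesis. Theorem~\ref{thm:first} gives us, for free, a dominating set of size at most $\alpha(G)\cdot\log n$, with no restriction on $\alpha(G)$. The only thing left to do is to observe that when $\alpha(G)\ge\log n$, we may replace the factor $\log n$ in that bound by the (at least as large) quantity $\alpha(G)$, obtaining $\gamma(G)\le\alpha(G)\cdot\log n\le\alpha(G)\cdot\alpha(G)=(\alpha(G))^2$.

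So the proof is essentially a single inequality: invoke Theorem~\ref{thm:first} to get $\gamma(G)\le\alpha(G)\log n$, then substitute $\log n\le\alpha(G)$. There are no further cases to consider and no auxiliary construction is needed; all the combinatorial work was already done in Lemma~\ref{thm:bound1} and Theorem~\ref{thm:first} (the iterated application of Lemma~\ref{lem:matrix2}).

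There is no real obstacle here. The one thing worth being explicit about is that the statement is insensitive to the base of the logarithm: whatever convention is used for $\log$ in Theorem~\ref{thm:first} is the same convention appearing in the hypothesis $\alpha(G)\ge\log n$, so the substitution $\log n\le\alpha(G)$ is literally the hypothesis and nothing needs to be rescaled. If one wanted, one could also note that the bound is only meaningful (better than the trivial $\gamma(G)\le n$) in the regime where $\alpha(G)$ is at least roughly $\log n$ but not too large, which is precisely the hypothesis; but this is a remark, not part of the proof.

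\begin{proof}
By Theorem~\ref{thm:first}, $G$ has a dominating set of size at most $\alpha(G)\cdot\log n$. Since $\alpha(G)\ge\log n$ by hypothesis, this is at most $\alpha(G)\cdot\alpha(G)=(\alpha(G))^2$. Hence $\gamma(G)\le(\alpha(G))^2$.
\end{proof}
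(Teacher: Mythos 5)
Your proof is correct and is exactly the argument the paper intends: Theorem~\ref{thm:large-alpha1} is stated as an immediate consequence of Theorem~\ref{thm:first}, obtained by replacing the factor $\log n$ with $\alpha(G)$ under the hypothesis $\alpha(G) \geq \log n$. Nothing further is needed.
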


Another case in which the domination number of a digraph can be bounded in terms of
its independence number is when the digraph is
directed-triangle-free.  For example, 
a directed-triangle-free digraph has independence number bounded by
$\alpha(G) \cdot \alpha(G)!$ (see Theorem 3 in \cite{GST12}).
Moreover, when
$\alpha(G) = 2$, this bound can be improved to $\gamma(G) \leq 3$ (see Theorem 4 in \cite{GST12}).

In \cite{HLNT17+}, we conjectured that the dichromatic number of a
directed-triangle-free digraph can be bounded as a polynomial function
of $\alpha(G)$.  Let $\vec \chi(G)$ denote the dichromatic number of a digraph.  Then for any digraph $G$, $\gamma(G)$
and $\vec \chi(G)$ can be related as follows.
\begin{observation}
For any digraph $G$, we have $\gamma(G) \leq \alpha(G) \cdot \vec \chi(G)$.
\end{observation}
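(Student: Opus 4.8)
The plan is to combine a dichromatic decomposition with a domination bound for acyclic digraphs. Recall that $\vec\chi(G)=k$ means $V(G)$ can be partitioned into classes $V_1,\dots,V_k$ such that every $G[V_i]$ is acyclic (contains no directed cycle). For each $i$ I would produce a dominating set $D_i$ of $G[V_i]$ with $|D_i|\le\alpha(G[V_i])$, and set $D=\bigcup_{i=1}^{k}D_i$. Every vertex of $G$ lies in some $V_i$ and is dominated by $D_i$ within $G[V_i]$, hence dominated by $D$ in $G$; so $D$ is a dominating set of $G$ and $\gamma(G)\le\sum_{i=1}^{k}|D_i|$. Since an independent set of $G[V_i]$ is an independent set of $G$, we have $\alpha(G[V_i])\le\alpha(G)$, and therefore $\gamma(G)\le k\cdot\alpha(G)=\alpha(G)\cdot\vec\chi(G)$.

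The content is in the claim that an acyclic digraph $H$ has a dominating set of size at most $\alpha(H)$. I would prove it by a greedy sweep along a topological order $v_1,\dots,v_m$ of $H$ (which exists since $H$ is acyclic, with every arc pointing forward). Starting from $D=\emptyset$, process $v_1,\dots,v_m$ in order and add $v_i$ to $D$ whenever $v_i$ is not yet dominated, i.e.\ whenever no in-neighbor of $v_i$ currently lies in $D$. At termination $D$ dominates $H$: a vertex left undominated would have been added when it was processed. Moreover $D$ is independent: if $v_a,v_b\in D$ with an arc $v_av_b$, then $a<b$, and $v_a$ was added before iteration $b$, so at iteration $b$ the vertex $v_b$ already had the in-neighbor $v_a$ in $D$ and would not have been added — a contradiction. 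An independent set has size at most $\alpha(H)$, which gives the claim.

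I expect the acyclic claim to be the only real obstacle, and even it is mild; the point to watch is that the greedy set must be independent for the bound $|D|\le\alpha(H)$ to apply. (Equivalently, the reverse digraph $H^{\mathrm{rev}}$ is acyclic and hence has a kernel, i.e.\ an independent set meeting the out-neighborhood of every vertex outside it, which is exactly an independent dominating set of $H$.) The dichromatic decomposition, the monotonicity $\alpha(G[V_i])\le\alpha(G)$, and the fact that a union of locally dominating sets dominates are all routine.
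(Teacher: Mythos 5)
Your proposal is correct and matches the paper's argument: the paper likewise partitions $V(G)$ into $\vec\chi(G)$ acyclic color classes and uses the fact that every acyclic digraph has a kernel (an independent, hence size-at-most-$\alpha(G)$, dominating set). Your greedy sweep along a topological order is just an explicit proof of that kernel fact, which the paper takes as known.
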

This follows from the fact that in a legal coloring, each color class forms an induced
acyclic digraph and every acyclic digraph has a kernel (i.e., an
independent dominating set).  Thus, if the aforementioned conjecture
holds, then the following must also hold.

\begin{conjecture}\label{conj:P(alpha)}
There is an integer $\ell$ such that if $G$ is a
directed-triangle-free digraph with $\alpha(D) = \alpha$, 
then $\gamma(G) \leq \alpha^{\ell}$.
\end{conjecture}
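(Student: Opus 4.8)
The plan is to reduce to a bound on the dichromatic number and, since that bound is itself open, to then attack the conjecture directly by induction on $\alpha(G)$. For the reduction: by the Observation preceding the conjecture, $\gamma(G)\le\alpha(G)\cdot\vec\chi(G)$, so it would suffice to prove that every directed-triangle-free digraph satisfies $\vec\chi(G)\le\alpha(G)^{\ell-1}$ for some fixed $\ell$, which is exactly the conjecture of \cite{HLNT17+} quoted just above. As that statement is unproven, I would instead aim for a self-contained argument in the spirit of Theorem~\ref{thm:first} and Observation~\ref{ob:indep}: repeatedly peel off a small set of vertices whose closed out-neighborhood can be added to a dominating set, arranging that the remaining induced subdigraph has strictly smaller independence number.

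The crux I would try to establish is the following statement: \emph{every directed-triangle-free digraph $G$ with $\alpha(G)=\alpha\ge 2$ has a stable set $S$ with $|S|\le\alpha$ such that $\alpha\big(G[V\sm N^+[S]]\big)\le\alpha-1$} (a version with $|S|$ bounded by any fixed polynomial in $\alpha$ would also do). Since an induced subdigraph of a directed-triangle-free digraph is again directed-triangle-free, this gives the induction $\gamma(G)\le |S|+\gamma\big(G[V\sm N^+[S]]\big)\le \alpha+(\alpha-1)^2\le\alpha^2$, with base case $\alpha=1$ (a transitive tournament, so $\gamma=1$); hence $\ell=2$. This strategy, if it works, also recovers $\gamma(G)\le 3$ for $\alpha=2$ (Theorem~4 of \cite{GST12}), since there the residual digraph is a transitive tournament, and it would improve the factorial-type bound $\gamma(G)\le\alpha(G)\cdot\alpha(G)!$ of \cite{GST12} to a genuine polynomial.

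To produce such an $S$, I would begin from a maximum (or merely maximal) stable set $I=\{v_1,\dots,v_\alpha\}$ and analyze the ``survivors'': any stable $\alpha$-set $J$ disjoint from $N^+[I]$ must have all its $I$--$J$ adjacencies oriented from $J$ to $I$, and by maximality of $I$ every vertex of $J$ sends an arc into $I$ and every vertex of $I$ receives one from $J$. The goal is to use directed-triangle-freeness to rule out such a $J$ for a suitable $S\supseteq I$, possibly after adding to $S$ a bounded number of extra vertices chosen to hit the sets $N^-(v_i)$. The main obstacle lies precisely here: two non-adjacent vertices of $S$ together with two non-adjacent vertices of $J$ never form a directed triangle, so any obstruction to the existence of $J$ must be genuinely global, and it is unclear that removing the out-neighborhoods of only $\mathrm{poly}(\alpha)$ vertices suffices to lower the independence number --- this gap is, in effect, what separates the known super-polynomial bound of \cite{GST12} from the conjectured polynomial one. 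Absent such a lemma, the only alternatives I see are to bound the number of peeling rounds by a polynomial in $\alpha$ directly, or to prove the dichromatic-number conjecture of \cite{HLNT17+}; both appear to require new structural understanding of directed-triangle-free digraphs of bounded independence number.
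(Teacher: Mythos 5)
This statement is a conjecture, and the paper offers no proof of it --- only the motivating observation that $\gamma(G)\le\alpha(G)\cdot\vec\chi(G)$, which reduces it to the (also open) dichromatic-number conjecture of \cite{HLNT17+}. Your proposal is therefore being measured against an open problem, and, as you yourself acknowledge, it does not close it. The entire weight of your argument rests on the unproved ``peeling lemma'': that every directed-triangle-free digraph $G$ with $\alpha(G)=\alpha\ge 2$ contains a stable set $S$ with $|S|\le\alpha$ (or $|S|\le\mathrm{poly}(\alpha)$) such that $\alpha\big(G[V\sm N^+[S]]\big)\le\alpha-1$. Granting that lemma, your induction is sound: the recursion $\gamma(G)\le|S|+\gamma\big(G[V\sm N^+[S]]\big)$ is valid because out-neighborhoods only shrink when passing to induced subdigraphs, the base case $\alpha=1$ is a transitive tournament, and the arithmetic $\alpha+(\alpha-1)^2\le\alpha^2$ checks out. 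But the lemma itself is at least as hard as Conjecture~\ref{conj:P(alpha)} --- indeed strictly stronger, since it would yield the specific exponent $\ell=2$, whereas the conjecture only asks for some fixed $\ell$, and the paper's lower bound of $\tfrac{3}{2}\alpha(G)$ leaves room for $\ell=2$ but gives no evidence for it. Your own diagnosis of why directed-triangle-freeness is hard to exploit here (a directed triangle needs adjacencies, so two stable sets interact only through a global orientation structure) is exactly the obstruction that separates the $\alpha(G)\cdot\alpha(G)!$-type bound of \cite{GST12} from a polynomial one. In short: the reduction and the induction scaffolding are correct, but the key structural step is missing, so this remains a proof strategy rather than a proof --- which is consistent with the statement's status in the paper as a conjecture.
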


Moreover, as pointed out in \cite{GST12}, the best theoretically possible upper bound on the
domination number of directed-triangle-free digraph $G$ cannot be better than $\gamma(G)
\leq \frac{3}{2}\alpha(G)$, as demonstrated by a disjoint union of
cyclically oriented pentagons.


\end{document}